\documentclass[11pt,reqno]{amsart}
\usepackage{mathpazo} 
\linespread{1.05}        
\normalfont
\usepackage[T1]{fontenc}

\usepackage[utf8]{inputenc}
\usepackage{amsmath}
\usepackage{amsfonts}
\usepackage{dirtytalk}
\usepackage{amssymb}
\usepackage{amsthm}
\usepackage{mathtools}
\usepackage{stmaryrd}
\usepackage{tikz}
\usepackage{tikz-cd}
\usepackage[all]{xy}
\usepackage{enumitem}
\usepackage{xcolor}
\usepackage[margin=1.0in]{geometry} 

\newtheorem{thm}{Theorem}[section]
\newtheorem{theorem}[thm]{Theorem}
\newtheorem{lemma}[thm]{Lemma}

\newtheorem{corollary}[thm]{Corollary}
\newtheorem{prop}[thm]{Proposition}

\theoremstyle{definition}

\newtheorem{definition}[thm]{Definition}
\newtheorem{remark}[thm]{Remark}
\newtheorem{example}[thm]{Example}
\newtheorem{notation}[thm]{Notation}

\newenvironment{customthm}[1]
{\innercustomthm}
{\endinnercustomthm}


\def\cocoa{{\hbox{\rm C\kern-.13em o\kern-.07em C\kern-.13em o\kern-.15em A}}}

\def\B{{\mathbb{B}}}

\def\N{{\mathbb{N}}}

\newcommand{\mc}[1]{\mathcal{#1}}

\newcommand{\mf}[1]{\mathfrak{#1}}

\def\supp{\textrm{Supp}}

\def\<{\langle}
\def\>{\rangle}


\begin{document}

\title{Prime ideals in the Boolean polynomial semiring}

\author{Kalina Mincheva}
\address{Department of Mathematics, Tulane University, New Orleans, LA 70115}
\email{kmincheva@tulane.edu}

\author{Naufil Sakran}
\address{Department of Mathematics, Tulane University, New Orleans, LA 70115}
\email{nsakran@tulane.edu}


\keywords{Boolean semiring, tropical semiring, prime ideal, idempotent semiring}

\begin{abstract}
In this article, we disprove a conjecture of F. Alarcón and D. Anderson and give a complete classification of the prime ideals in the one variable polynomial semiring with coefficients in Boolean semifield. 
We group the prime ideals of $\B[x]$ into three classes, indexed by integers.
\end{abstract}

\maketitle

\section{Introduction}

The study of prime ideals in polynomial rings 
plays a fundamental role in algebra and algebraic geometry, providing insights into their structure, decomposition properties, and applications in various mathematical disciplines. In classical commutative algebra, the classification of prime ideals in polynomial rings over fields has been extensively studied, leading to deep connections with algebraic geometry and number theory. Moreover, there is an algorithm to verify if an ideal of such a ring is prime. However, if we replace the coefficient field with an additively idempotent semifield, the problem of classifying primes ideals becomes very different and challenging due to the lack of additive inverse in additively idempotent semirings.


One of the main motivations for the study of prime ideals of polynomial semirings with coefficients in idempotent semifields comes from tropical geometry. The goal is to understand the underlying algebra of tropical spaces in order to strengthen existing tropical methods. We note that the k-ideals or subtractive ideals {\cite{Gol92}} that are usually studied in the context of semirings are a very small and restrictive class of ideals in polynomial (additively idempotent) semirings and do not contain interesting geometric information. Instead, of greater interest have been other classes of ideals such as tropical ideals or closed ideals, defined and studied in \cite{FGGM24}, \cite{JM25}, \cite{MR18}. In \cite{JM25}, the authors study prime closed and prime tropical ideals using congruence methods.

Of particular interest is the polynomial semiring with coefficients in the Boolean semifield $\B$, which is the smallest idempotent semifield. The Boolean semifield has two elements $\{1,0\}$, where $1$ is the multiplicative identity, $0$ is the additive identity and $1+1 = 1$. This article focuses on classifying the prime ideals of the semiring $\B[x]$. The prime ideals of $\B[x]$ behave very differently than those of $k[x]$ where $k$ is any field. For example, the ideal $\<1+x\>$ is not prime in $\B[x]$.

One of the first studies of prime ideals in $\B[x]$ was done by F. Alarcón and D. Anderson in \cite{AA94}, where they gave the following partial classification of the prime ideals in $\B[x]$. They relate the ideals of $\B[x]$ to prime subsets of $\N$. To a subset $\mf{A}$ of $\N$ we can associate two ideals of $\B[x]$
\begin{align*}
    \mc{I}_{\mf{A}}&=\< x,\{1+x^a\mid a\in \mf{A}\}\> \text{ and } \\
    \mc{J}_{\mf{A}}&=\< x,\{1+x^a\mid a\in \mf{A}\}\> - \{x^mf\mid m\geq 0,\,f\notin \mc{I}_{\mf{A}}\}.
\end{align*}

\begin{definition}
    We call a subset $\mf{A}$ of $\N$ prime if $a+b\in \mf{A}$ then either $a\in \mf{A}$ or $b\in \mf{A}$.
\end{definition}

\begin{customthm}{\cite[Theorem 11, for ideals containing $\<x\>$]{AA94}} 
The ideal $\mc{I}_{\mf{A}}$ is prime if and only if $\mf{A}$ is a prime subset. If $\mc{P}$ is a prime ideal in $\B[x]$ containing the ideal $\<x\>$ such that $\mf{A}_{\mc{P}}=\{a\mid 1+x^a\in \mc{P}\}$, then $\mc{P}=\mc{I}_{\mf{A}_{\mc{P}}}$.
\end{customthm}

The above theorem classifies all the prime ideals of $\B[x]$ that contain the ideal $\<x\>$. For the classification for prime ideals not containing $\<x\>$, Alarcón and Anderson showed the following.

\begin{customthm}{\cite[Theorem 11, for ideals \textbf{not} containing $\<x\>$]{AA94}} 
The ideal $\mc{J}_{\mf{A}}$ is prime if and only if $\mf{A}$ is a prime subset.
\end{customthm}

In \cite{AA94}, the authors conjecture that if $\mc{P}$ is a prime ideal in $\B[x]$, not containing the ideal $\<x\>$ then $\mc{P}=\mc{J}_{\mf{A}}$ for some prime subset $\mf{A}$ in $\N$. In Example~\ref{counterexample_AA94}, we will show that this is not the case, and the following ideal
\[\mc{J} = \left\langle 1+x,\{1+x^a+x^{a+1}:a\geq 2\}    \right\rangle\]
does not correspond to any ideal of the form $\mc{J}_{\mf{A}}$ for any prime subset $\mf{A}$. 

In this article, we will give a complete classification of prime ideals of $\B[x]$ that do not contain the principal ideal $\<x\>$. Our first main result indicates that prime ideals of $\B[x]$ can be parametrized by prime subsets of $\N$.

\begin{customthm}{A}\label{A}
Let $\mc{P}$ be a prime ideal in $\B[x]$. Then the set 
$\mathfrak{A}_{\mc{P}}:=\{a:1+x^a\in \mc{P}\}$
is a prime subset.
\end{customthm}

\noindent Theorem \ref{A} suggests that we have a surjective map from the set of prime ideals of $\B[x]$ to the set of prime subsets of $\N$, given by $\mc{P}\longmapsto \mf{A}_{\mc{P}}$. This motivates us to partition the set of prime ideals of $\B[x]$ by their corresponding prime subsets, and say that a prime ideal $\mc{P}$ belongs to \say{class} $[\mf{A}]$ if $\mf{A}_{\mc{P}}=\mf{A}$. Therefore, for a specific prime subset $\mf{A}$ of $\N$, we aim to determine all the prime ideals in the class $[\mf{A}]$. 

Following this idea, we study how the structure of a prime subset $\mf{A}$ affects the type of prime ideals in $[\mf{A}]$. In Lemma~\ref{A prime -> N-A submonoid}, we show that $\mf{A}$ is a prime subset if and only if $\N-\mf{A}$ is a semigroup in $\N$. Now, since semigroup of $\N$ are finitely generated, there exists a minimal generating set $\{b_1,b_2,\cdots,b_s\}$ of the semigroup $\N-\mf{A}$. Denote by $d$ the greatest common divisor of $b_1,\cdots,b_s$, i.e., $d=\gcd(b_1,\cdots,b_s)\geq 1$. One sees that prime subsets in $\N$ can be parametrized by the set of positive integers as follows: for any fixed $d\geq 1$, let $[d]$ denote the set of all the prime subsets $\mf{A}$, for which $d=\gcd(b_1,\cdots,b_s)$, where $\N-\mf{A}=\<b_1,\cdots,b_s\>$. Hence, viewing prime subsets $\mf{A}$ in terms of their corresponding semigroup $\N-\mf{A}$, we are able partition the set of all prime subsets indexed by the set of positive integers $\N$. For example, for $d=1$, the class $[d]$ consists of all finite prime subsets $\mf{A}$ (i.e $|\mf{A}|<\infty$).

Using this terminology, we provide a complete classification of prime ideals in terms of the class $[d]$ to which they belong. By combining our results with those of F. Alarcón and D. Anderson in \cite{AA94}, we state below the main theorem of this article, which completes the classification. By convention, the empty set is also a prime subset. 

\begin{notation}
    Let $\mf{A}$ be a non-empty prime subset and let $f(x)=1+x^{m_1d}+\cdots +x^{m_rd}\in\B[x]$. We say $f$ satisfies property $\star$ if the set $\{m_1d,\ldots,m_rd\}$ satisfies the following:
    \begin{itemize}
        \item $m_id\in\mf{A}$ for some $i$ but $\{m_1d,\ldots,m_rd\}\not\subset\mf{A}$,
        \item and there exists $t$ such that $|m_t-m_j|d\notin\mf{A}$ for all $j$.  
    \end{itemize}
\end{notation}

For the remainder of the section, we say $f(x)=1+x^{m_1d}+\cdots+x^{m_rd}$ satisfy $\star$ if $m_id\in\mf{A}$ for some $i$ but $\{m_1d,\ldots,m_rd\}\not\subset \mf{A}$ and there exists $t$ such that $|m_t-m_j|d\notin\mf{A}$ for all $j=1,\ldots,r$.  

\begin{customthm}{B}\label{D}
    Let $\mf{A}$ be a non-empty prime subset in $\N$ and let $\mc{S}$ be the semigroup $\N-\mf{A}$ generated by $\<a_1,\ldots,a_r\>$. Let $d=\gcd(a_1,\ldots,a_r)$. 
    Let $\mc{J}$ be a prime ideal not containing $\<x\>$ in $\B[x]$. 
    \begin{enumerate}
        \item if $d\notin\mf{A}$, then $\mc{J}=\mc{J}_{\mf{A}}$.
        
        \item if $d\in \mf{A}$ then either $\mc{J}=\mc{J}_{\mf{A}}$ or $\mc{J}$ is of the form
        \[\mc{J}= \langle\{1+x^{m}+x^{m+a}:a\in\mf{A}, m\geq 0\},\mc{Q}\rangle\]
        for some $\mc{Q}\subseteq\{f\mid f\textrm{ is irreducible and satisfy $\star$}\}$.
        \end{enumerate}
\end{customthm}

The classification of prime ideals in the multivariable Boolean semiring $\B[x_1,\ldots,x_n]$ remains an open problem. In the final section, we present some partial results on the case $\B[x_1,\ldots,x_n]$. 

\addtocontents{toc}{\protect\setcounter{tocdepth}{2}}

\section*{Acknowledgments}
K.M. acknowledges the support of the Simons Foundation, Travel Support for Mathematicians MPS-TSM-00008148. N.S. acknowledges partial support from the Louisiana Board of Regents grant, contract no. LEQSF(2023-25)-RD-A-21.

\section{Notations and Preliminaries}\label{not_and_prem}

Let $\N=\{1,2,3,\rightarrow\}$ where the arrow $\rightarrow$ indicates all the subsequent integers. 

\begin{definition}\label{prime subset}
    A subset $\mf{A}$ in $\N$ is said to be \textit{prime} if for any $a+b\in \mf{A}$, we have either $a\in \mf{A}$ or $b\in \mf{A}$.    
\end{definition}
\begin{remark}
    The empy set is a prime subset by default. Also, for any non-empty prime subset, we must have $1\in\mf{A}$. 
\end{remark}
\begin{lemma}\label{A prime -> N-A submonoid}
    A subset $\mf{A}$ in $\N$ is prime if and only if $\N-\mf{A}$ is a semigroup in $\N$.    
\end{lemma}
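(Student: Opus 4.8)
The plan is to observe that the primeness condition and closure under addition of the complement are contrapositives of one another, so the whole lemma collapses to a single De~Morgan manipulation. Set $S := \N - \mf{A}$ for the complement; the defining property of a prime subset, $a+b\in\mf{A}\Rightarrow(a\in\mf{A}\text{ or }b\in\mf{A})$, is logically equivalent to its contrapositive $(a\notin\mf{A}\text{ and }b\notin\mf{A})\Rightarrow a+b\notin\mf{A}$, which says precisely that $a,b\in S$ implies $a+b\in S$, i.e.\ that $S$ is closed under addition. I would therefore reduce the statement to the equivalence \say{$\mf{A}$ prime $\iff$ $S$ closed under $+$} and then deal with the monoid bookkeeping.

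For the forward direction I assume $\mf{A}$ is prime and verify the submonoid axioms for $S$. Closure is the contrapositive reformulation above, proved by taking $a,b\in S$ and deriving a contradiction from $a+b\in\mf{A}$ via primeness. For the identity element, since $\mf{A}\subseteq\N$ contains no additive identity, adjoining $0$ (the empty sum) places it in $S$ automatically, so $S$ is a genuine submonoid rather than merely a subsemigroup.

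For the converse I assume $S$ is a submonoid and recover primeness: given $a+b\in\mf{A}$, if neither $a$ nor $b$ lay in $\mf{A}$ then both would lie in $S$, whence $a+b\in S$ by closure, contradicting $a+b\in\mf{A}$; hence $a\in\mf{A}$ or $b\in\mf{A}$.

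Since the core is one logical equivalence, there is no genuine obstacle; the only care needed is bookkeeping around the identity element---making sure that \say{closed under addition} is upgraded to \say{submonoid} by including $0$---and checking the degenerate case $\mf{A}=\emptyset$, where $S$ is all of $\N$ together with $0$ and is trivially a submonoid, consistently with the convention that $\emptyset$ is prime.
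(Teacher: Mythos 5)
Your proposal is correct and matches the paper's own argument: both directions are exactly the contrapositive manipulation you describe (closure of $\N-\mf{A}$ under addition from primeness, and primeness from closure). The extra bookkeeping about the identity element and the empty-set case is harmless and does not change the substance.
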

\begin{proof}
    Let $\mf{A}$ be a prime subset and let $x,y\in\N-\mf{A}$. Then if $x+y\in\mf{A}$, it would imply $x\in\mf{A}$ or $y\in\mf{A}$, which is a contradiction. So, we must have $x+y\in\N-\mf{A}$. Therefore, $\N-\mf{A}$ is a semigroup.

    Conversely, suppose $\N-\mf{A}$ is a semigroup and let $x+y\in\mf{A}$. If $x\notin\mf{A}$, then we must have $y\in\mf{A}$ as if it were not the case, then we would have $x+y\in\N-\mf{A}$, a contradiction. 
\end{proof}

We say that a non-empty set $R$ is a {\it semiring} with respect to the binary operations $(+,\cdot)$ if it satisfies the same axioms as those in a ring, except we do not require the addition to be invertible. A {\it semifield} is a commutative semiring with multiplicative unit in which all nonzero elements have multiplicative inverse.

We will denote by $\B$ the \textit{Boolean semifield}. Its underlying set is $\{0,1\}$ with addition and multiplication given by the rules: 
\[0+0=0,\qquad0+1=1+0=1+1=1,\qquad 0\cdot 0=0,\qquad 0\cdot 1=1\cdot 0=0,\qquad 1\cdot 1=1.\]

By $\B[x]$ we denote the polynomial semiring with coefficients in $\B$. Let $\mc{I}$ be a subset of $\B[x]$. We say $\mc{I}$ is an \textit{ideal} in $\B[x]$ if for any $f,g\in \mc{I}$, we have $f+g\in\mc{I}$ and $\B[x]\cdot\mc{I}\subseteq\mc{I}$. Moreover, $\mc{I}$ is said to be a \textit{prime ideal} if for any $fg\in\mc{I}$, we have either $f\in \mc{I}$ or $g\in\mc{I}$. We list here some properties of $\B[x]$ important to our discussion.

\begin{itemize}
    \item $\B[x]$ is not UFD as $(1+x+x^3)(1+x^2+x^3)=(1+x)(1+x^2)(1+x^3)$.
    \item The ideal $\< 1+x\>$ is not prime as $(1+x^2+x^3)(1+x+x^3)=(1+x)^6$ and neither of the factors on the left-hand side are in the ideal.
    \item If $f(x)\in\B[x]$ is a degree $N$ polynomial with $1\in\supp(f)$, then 
    \[f(x)(1+x)^N=(1+x)^{2N}.\]
    This implies that every prime ideal of $\B[x]$ contains the polynomial $1+x$, except for the prime ideal $\<x\>$.
\end{itemize}

We can divide the primes of $\B[x]$ into two categories:
\begin{enumerate}
    \item[(I)] those that contain $\<x\>$,
    \item[(II)] and those that do not contain $\<x\>$.
\end{enumerate}
Throughout the paper, if a prime ideal lies in category (I), we denote it by $\mc{I}$ ($\mc{I}',\mc{I}_{\mf{A}}$, etc.) and if the prime ideal lies in category (II), we denote it by $\mc{J}$ ($\mc{J}',\mc{J}_{\mf{A}}$, etc.). Observe that we can express the ideal $\mc{J}_{\mf{A}}:=\mc{I}_{\mf{A}} - \{x^mf\mid m\geq 0,\,f\notin \mc{I}_{\mf{A}}\}$ also as 
\begin{align*}
    \mc{J}_{\mf{A}} &= \{x^mf\mid m\geq0,\,f\in\mc{I}_{\mf{A}}\textrm{ with $1\in\supp(f)$}\}\cup\{0\}
    \\
    &= \<\{1+x^a+x^m\mid a\in\mf{A}\textrm{ and }m\geq 0\}.\>
\end{align*}
We will mostly use the above description of $\mc{J}_{\mf{A}}$ throughout the article. The following example contradicts the conjecture of F. Alarcón and D. Anderson in \cite{AA94}.
\begin{example}\label{counterexample_AA94}
    The ideal 
    \[\mc{J} = \left\langle \{1+x^a+x^{a+1}:a\geq 0\}    \right\rangle\]
    is a prime ideal in category (II) and it is not of the form $\mc{J}_{\mf{A}}$ for any prime subset $\mf{A}$.
    
    We verify that $\mc{J}$ is a prime ideal. Observe that an alternative way of writing $\mc{J}$ is:
    \[\mc{J}=\left\{f\in \B[x]: \textrm{ if $x^a$ is the leading monomial of $f$ then $x^{a-1}+x^a\in \supp(f)$}\right\}.\]
    Let $f=\sum_ix^{a_i}$ and $g=\sum_jx^{b_j}$ be elements of $\B[x]$ with leading monomials $x^a$ and $x^b$, respectively. Let $fg\in \mc{J}$. By the definition above, $x^{a+b-1}+x^{a+b}\in \supp(fg)$. But then this implies $x^{a-1}\in \supp(f)$ or $x^{b-1}\in \supp(g)$, implying that $f\in \mc{J}$ or $g\in\mc{J}$. Hence, $\mc{J}$ is a prime ideal. Here $\mf{A}_{\mc{J}}=\{a\mid 1+x^a\in \mc{J}\}=\{1\}$.
    
    Finally, $\mc{J}$ is not of the form 
    \begin{align*}
        \mc{J}_{\mf{A}_{\mc{J}}}= \langle\{1+x+x^m:m\geq 0\} \rangle,
    \end{align*}
    since $1+x^2+x^3\in \mc{J}$ but $1+x^2+x^3\notin\mc{J}_{\mf{A}_{\mc{J}}}$. 
\end{example}

\addtocontents{toc}{\protect\setcounter{tocdepth}{2}}

\section{Prime ideals and prime subsets}\label{prime ideals and prime subsets}

In this section, we prove that all the prime ideals in $\B[x]$ can be parametrized by prime subsets of $\N$.

\begin{theorem}\label{prime ideal -> prime subset}
    Let $\mc{P}$ be a prime ideal in $\B[x]$. Then the set 
    \[\mf{A}:=\{a\mid 1+x^a\in \mc{P}\}\]
    is a prime subset.
\end{theorem}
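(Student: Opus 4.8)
The plan is to reduce the statement, via the definition of prime subset, to the implication: if $1+x^{a+b}\in\mc{P}$ then $1+x^a\in\mc{P}$ or $1+x^b\in\mc{P}$ (the case $\mf{A}=\emptyset$ being vacuous). The engine of the proof is a divisibility fact in $\B[x]$ that lets primality do all the work: I claim that for every $a,b\ge 1$ there exist exponents $p,q\ge 1$ such that $1+x^{a+b}$ divides $(1+x^a)^p(1+x^b)^q$ in $\B[x]$. Granting this, I would write $(1+x^a)^p(1+x^b)^q=(1+x^{a+b})\,h$; since $1+x^{a+b}\in\mc{P}$ and $\mc{P}$ is an ideal, the left-hand side lies in $\mc{P}$. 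As it is a product of $p+q$ factors, each equal to $1+x^a$ or $1+x^b$, primality of $\mc{P}$ forces one such factor into $\mc{P}$, i.e. $1+x^a\in\mc{P}$ or $1+x^b\in\mc{P}$; equivalently $a\in\mf{A}$ or $b\in\mf{A}$. I note that this argument is insensitive to whether $\mc{P}$ contains $\<x\>$, so both categories (I) and (II) are handled at once.

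Everything therefore rests on the divisibility claim, which I would prove combinatorially on supports. Recall that in $\B[x]$ addition is union of supports and multiplication is the sumset of supports, so with $n=a+b$ one has $\supp\big((1+x^a)^p(1+x^b)^q\big)=\{ia+jb:\ 0\le i\le p,\ 0\le j\le q\}=:R_{p,q}$, while a relation $f=(1+x^{n})h$ holds for some $h$ exactly when $\supp(f)=\supp(h)\cup(\supp(h)+n)$. A short check shows this is possible if and only if $\supp(f)$ has no \emph{$n$-isolated} exponent, i.e. every $v\in\supp(f)$ has $v-n$ or $v+n$ again in $\supp(f)$: in that case each maximal $n$-chain of $\supp(f)$ has length $\ge 2$ and is covered by the pairs $\{v,v+n\}$, so one may take $\supp(h)=\{v\in\supp(f):v+n\in\supp(f)\}$. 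Thus the claim reduces to: for $p,q$ large, $R_{p,q}$ has no $n$-isolated exponent.

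To see this, I would interpret the two moves $v\mapsto v+n$ and $v\mapsto v-n$ on $R_{p,q}$ as the diagonal shifts $(i,j)\mapsto(i+1,j+1)$ and $(i,j)\mapsto(i-1,j-1)$ on the grid $\{0,\dots,p\}\times\{0,\dots,q\}$, remembering that a value $v$ may have several representations $(i,j)$, any two differing by a multiple of $(b/d,-a/d)$ with $d=\gcd(a,b)$. A representation with $i\le p-1$ and $j\le q-1$ realizes $v+n\in R_{p,q}$, and one with $i\ge 1$ and $j\ge 1$ realizes $v-n\in R_{p,q}$; so a single representation blocks \emph{both} moves only at the two anti-diagonal corners $(p,0)$ and $(0,q)$, and $v$ could be $n$-isolated only if \emph{all} of its representations lay among these two. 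But for $p,q\ge a+b$ the corner values $pa$ and $qb$ each acquire a further representation, namely $(p-b/d,\,a/d)$ and $(b/d,\,q-a/d)$, which is interior enough to permit one of the two moves; hence no exponent of $R_{p,q}$ is $n$-isolated, and taking $p=q=a+b$ already suffices. I expect this corner analysis — making precise that large $p,q$ eliminate every $n$-isolated exponent of $R_{p,q}$ — to be the only delicate point, the reduction to it through primality being immediate.
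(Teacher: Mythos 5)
Your proof is correct and follows essentially the same route as the paper: both reduce primality of $\mf{A}$ to exhibiting a product of powers of $1+x^a$ and $1+x^b$ that is divisible by $1+x^{a+b}$, and then let primality of $\mc{P}$ pick out one of the binomial factors. The only difference is how the divisibility is certified — the paper writes the explicit identity $(1+x^r)^s(1+x^s)^s(1+x^{r+s})=(1+x^r)^{s+1}(1+x^s)^{s+1}$ and checks it by a case analysis on exponents, whereas you derive it from a general ``no $n$-isolated exponent'' criterion together with a corner analysis of the grid of representations, which is a sound (and slightly more conceptual) verification of the same key fact.
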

\begin{proof}
    If $\mf{A}=\emptyset$ or $\mf{A}=\{1\}$, then we have nothing to prove. Now let $|\mf{A}|>1$ and suppose $\mf{A}$ is not a prime subset, then there exists $a\in \mf{A}$ and $r,s\in\N\backslash \mf{A}$ such that $r+s=a$. Assume that $r\leq s$. We claim that the following 
    \begin{align}\label{eq: product}
        (1+x^r)^s(1+x^s)^s(1+x^a)=(1+x^r)^{s+1}(1+x^s)^{s+1}.
    \end{align}
    holds in $\B[x]$. It suffices to show that all possible linear combinations $ir+i's$, where $0\leq i,i'\leq s+1$ can be written as the linear combination of $jr+j's+j''a$ for $0\leq j,j'\leq s$ and $0\leq j''\leq 1 $ and vice versa. To see this, consider a linear combination $ir+i's$, where $0\leq i,i'\leq s+1$.
    \begin{enumerate}
        \item If $i,i'\leq s$ then we can take $j=i$ and $j'=i'$.
        \item If $i=s+1$ and $i'=0$ then we can take $j=1$ and $j'=r$.
        \item If $i'=s+1$ and $i=0$ then since $i's=rs+(i'-r)s,$  we can take $j=s$ and $j'=i'-r$.
        \item If $i=s+1$ and $0<i'\leq s$ then since $ir+i's=sr+(i'-1)s+a,$ we can take $j=s,j'=i'-1$ and $j''=1$. 
        \item If $0<i \leq s$ and $i'=s+1$ then since $ir+i's=(i-1)r+s^2+a$ we can take $j=i-1,j'=s$ and $j''=1$. 
        \item If $i = s+1$ and $i'= s+1$ then since $ir+i's=(i-1)r+(i'-1)s+a$ we can take $j=i-1,j'=i'-1$ and $j''=1$. 
    \end{enumerate}
    Similarly, consider an arbitrary linear combination $jr+j's+j''a$ for $0\leq j,j'\leq s$ and $0\leq j''\leq 1$.
    \begin{enumerate}
        \item If $j,j'\leq s$ and $j''=0$ then we can simply take $i=j$ and $i'=j'$.
        \item If $j,j'\leq s$ and $j''=1$ then we can simply take $i=j+1$ and $i'=j'+1$.
    \end{enumerate}
    
    This proves that Equation~(\ref{eq: product}) holds. The left-hand side of Equation~(\ref{eq: product}) is in $\mc{P}$ but neither $1+x^r$ nor $1+x^s$ is in $\mc{P}$ as $r,s\notin\mf{A}$. This contradicts our assumption that $\mf{A}$ is a prime set.
\end{proof}

\addtocontents{toc}{\protect\setcounter{tocdepth}{2}}

\section{Classification of the prime ideals of $\B[x]$}\label{Classification of prime ideals}

Let $\mf{A}\subseteq \N$ be a prime subset and let $\{da_1,\ldots,da_r\}$ generates the semigroup $\N\backslash\mf{A}$, where $\gcd(a_1,\ldots,a_r)=1$. Note that if $d=1$, then $\mf{A}$ is a finite set, and if $d>1$, then $\mf{A}$ is an infinite set. We say a prime subset $\mf{A}$ belongs to the class $d$ if $\N\backslash\mf{A}=\langle da_1,\ldots,da_r\rangle$ with $\gcd(a_1,\ldots,a_r)=1$. By Theorem~\ref{prime ideal -> prime subset}, a prime ideal $\mc{P}$ belongs to class $d$ if $\mf{A}_{\mc{P}}$ is in class $d$.

In this section, we prove that there are at most two types of prime ideals in category (II) with respect to each $d\in\N$. This will complete the classification of all prime ideals of $\B[x]$. Throughout the section, $\mc{I},\mc{I}_{\mf{A}}$ and $\mc{J},\mc{J}_{\mf{A}}$ will denote prime ideals in category (I) and category (II), respectively. For convenience, we write $\mf{A}$ for $\mf{A}_{\mc{J}}$.

\subsection{The case $d\in \mf{A}$}

Let $\mc{J}$ be a prime ideal with associated prime subset 
\[\mf{A}=\{a\in\N\mid 1+x^a\in\mc{J}\}.\]
Let $\N\backslash\mf{A}=\langle da_1\ldots,da_n\rangle$ with $\gcd(a_1,\ldots,a_n)=1$, $\mc{S}=\langle a_1,\ldots,a_r\rangle$, and $\alpha=\max\{\N\backslash\mc{S}\}+1$. Note that $(\alpha+i)d\in\N\backslash\mf{A}$ for all $i\geq 0$. Assume $d\in\mf{A}$. 

\begin{lemma}\label{m or m-a in A implies containtment}
    Let $a\in\mf{A}$. If $m\in\mf{A}$ or $|m-a|\in\mf{A}$, then 
    \[1+x^a+x^m\in\mc{J}\quad\textrm{and}\quad 1+x^{|m-a|}+x^m\in\mc{J}.\]
\end{lemma}
\begin{proof}
    Suppose $m\in\mf{A}$. Then clearly $(1+x^a)+(1+x^m)=1+x^a+x^m\in \mc{J}$. If $m-a>0$, we have $(1+x^{a})x^{m-a}+1+x^m=1+x^{m-a}+x^m\in\mc{J}$. Similarly, if $m-a<0$, we have $(1+x^{m})x^{a-m}+1+x^a=1+x^{a-m}+x^m\in\mc{J}$.

    Now, suppose $|m-a|\in\mf{A}$. Then clearly $1+x^{|m-a|}+x^m\in\mc{J}$. If $m-a>0$, then $(1+x^{m-a})x^a+1+x^a=1+x^a+x^m\in\mc{J}$. Similarly, if $m-a<0$, then $(1+x^{a-m})x^m+1+x^a=1+x^a+x^m\in\mc{J}$. 
\end{proof}

\begin{lemma}\label{d nmid a}
   If $a\in\mf{A}$ such that $d\nmid a$ then for every $m>a$, either $m-a\in\mf{A}$ or $m\in\mf{A}$. In particular,
    \[\langle\{1+x^a+x^m\mid m\geq 1\},\{1+x^{|m-a|}+x^{m}\mid m\geq 1\}\rangle\subseteq \mc{J}.\]
\end{lemma}
\begin{proof}
    If $|m-a|,m\notin \mf{A}$ then $d|a$, a contradiction. The rest follows from Lemma~\ref{m or m-a in A implies containtment}.
\end{proof}

\begin{lemma}\label{{m-a,m}_in_J}
    Let $a\in\mf{A}$. If $m,|m-a|\notin\mf{A}$ then either $1+x^a+x^m\in\mc{J}$ or $1+x^{|m-a|}+x^m\in\mc{J}$.
\end{lemma}
\begin{proof}
    Using the fact that
    \[(a+b+c)(ac+bc+ab)=(a+b)(b+c)(a+c),\]
    we have 
    \[(1+x^a+x^m)(1+x^{|m-a|}+x^m)=(1+x^a)(1+x^m)(1+x^{|m-a|})\in \mc{J}.\]
    Since the right-hand side of the above equation is in $\mc{J}$ and $\mc{J}$ is prime, we must have $1+x^a+x^m\in \mc{J}$ or $1+x^{|m-a|}+x^m\in \mc{J}$.
\end{proof}

\begin{lemma}\label{productRule1}
    If $m-a> \alpha$, then 
    \[(1+x^{\alpha d})^{m}\prod_{i=1}^{\alpha-1}(1+x^{(\alpha+i)d}) = (1+x^{(m-a)d}+x^{md}) + x^{\alpha d}(1+x^d)^{q-2\alpha} + (1+x^{ad}+x^{(m-a)d})x^{(q-m+a)d}\]
    for some positive integer $q$.
\end{lemma}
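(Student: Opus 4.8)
The key observation is that in $\B[x]$ a polynomial is determined by its support, that multiplication corresponds to the Minkowski sum of supports and addition to their union, and that every nonzero coefficient collapses to $1$; in particular $(1+x^c)^n=\sum_{j=0}^n x^{cj}$ and $(1+x)^n=\sum_{j=0}^n x^j$. Thus the lemma is the assertion that two explicit subsets of $\N\cup\{0\}$ coincide, and the plan is to compute both supports and to set $q:=m\alpha+\tfrac{3\alpha(\alpha-1)}{2}$, which will turn out to be the common degree of the two sides. I expect to show that each side has support exactly $\{0\}\cup[\alpha,q-\alpha]\cup\{q\}$.

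First I would dispatch the right-hand side, which is the easier direction. Writing $(1+x)^{q-2\alpha}=\sum_{j=0}^{q-2\alpha}x^j$, the middle summand $x^\alpha(1+x)^{q-2\alpha}$ has support exactly the interval $[\alpha,q-\alpha]$. The remaining two summands contribute the isolated exponents $\{0,m-1,m\}$ and $\{q-m+1,q-m+2,q\}$. Using the hypothesis $m-1>\alpha$ (equivalently $m\ge\alpha+2$) one checks that $m-1,\,m,\,q-m+1,\,q-m+2$ all lie in $[\alpha,q-\alpha]$ — the tightest inequality being $q-m+2\le q-\alpha$, which is exactly $m\ge\alpha+2$ — so they are absorbed, leaving the two spikes $0$ and $q$. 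This is precisely where the hypothesis $m-1>\alpha$ is needed.

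The substance is to show the left-hand side has the same support. Its support is the sumset $A+B$, where $A=\supp((1+x^\alpha)^m)=\{j\alpha:0\le j\le m\}$ and $B=\supp\big(\prod_{i=1}^{\alpha-1}(1+x^{\alpha+i})\big)=\{\sum_{i\in S}(\alpha+i):S\subseteq\{1,\dots,\alpha-1\}\}$. Since every factor $1+x^c$ is palindromic, so is the product, i.e. $A+B$ is symmetric under $n\mapsto q-n$; this lets me deduce the top of the support from the bottom. The endpoints are immediate: $0\in A+B$, and by symmetry $q\in A+B$; moreover the least positive element is $\alpha$ (the smallest nonzero elements of $A$ and $B$ are $\alpha$ and $\alpha+1$), so nothing lies in $(0,\alpha)$, and by symmetry nothing lies in $(q-\alpha,q)$. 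It then remains to prove the core claim $A+B\supseteq[\alpha,q-\alpha]$, that is, that the sumset has no gaps on this interval.

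For the gap-freeness I would group $B$ by the size $k=|S|$ and write each element as $k\alpha+\sigma$ with $\sigma=\sum_{i\in S}i$; by the standard fact that the size-$k$ subset sums of the consecutive integers $\{1,\dots,\alpha-1\}$ fill the entire interval $[\tfrac{k(k+1)}{2},\,k\alpha-\tfrac{k(k+1)}{2}]$, a target $n\in[\alpha,q-\alpha]$ is representable as $n=(c+k)\alpha+\sigma$ with $0\le c\le m$ exactly when there is a size-$k$ subset sum $\sigma\equiv n\pmod\alpha$ in the window $[\,n-(m+k)\alpha,\ n-k\alpha\,]$. I would take $k$ near $\alpha/2$, where the admissible interval for $\sigma$ has length $\ge\alpha-1$ and hence meets every residue class, and verify this window overlaps that interval for all $n$ in range; the tightest instances are the two ends of $[\alpha,q-\alpha]$, and by palindromic symmetry it suffices to treat the lower end, where the small values are covered directly by singletons $S=\{r\}$ (yielding $\alpha+r$) together with $A$. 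I expect this verification — reconciling the congruence $\sigma\equiv n$ with the size-dependent range for $\sigma$ uniformly across $[\alpha,q-\alpha]$, controlling the interaction between the $\alpha$-steps from $A$ and the multi-element subset sums from $B$ — to be the main obstacle; everything else is bookkeeping.
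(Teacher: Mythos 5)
Your proposal takes essentially the same route as the paper: both reduce the identity to the claim that each side has support $\{0\}\cup[\alpha,q-\alpha]\cup\{q\}$ with $q=m\alpha+\tfrac{3\alpha(\alpha-1)}{2}$, and your analysis of the right-hand side (including locating $m-1>\alpha$ as exactly the condition needed to absorb $m-1,m,q-m+1,q-m+2$ into the interval) is correct. The one step you defer as ``the main obstacle'' --- gap-freeness of the sumset $A+B$ on $[\alpha,q-\alpha]$ --- is precisely the step the paper dispatches with the single word ``Expanding,'' so your write-up is, if anything, more explicit than the paper's; your reduction via size-$k$ subset sums of $\{1,\dots,\alpha-1\}$ filling $[\tfrac{k(k+1)}{2},\,k\alpha-\tfrac{k(k+1)}{2}]$ is sound and would complete the argument once the residue-class bookkeeping is carried out.
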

\begin{proof}
    Expanding the product, we have
    \begin{align*}
        (1+x^{\alpha d})^{m}\prod_{i=1}^{\alpha-1}(1+x^{(\alpha+i)d}) &= 1 + x^{\alpha d} + x^{(\alpha+1)d} + \cdots + x^{(q-\alpha) d}+x^{qd}
        \\
        &= 1 + \sum_{i=0}^{q-2\alpha} x^{(\alpha+i)d}+x^{qd}
        \\
        &=(1+x^{(m-a)d}+x^{md}) + x^{\alpha d}(1+x^d)^{q-2\alpha} + (1+x^{ad}+x^{(m-a)d})x^{(q-m+a)d}
    \end{align*}
    where $q=m\alpha+\sum_{i=1}^{\alpha-1}(\alpha+i)=\frac{\alpha(2m+3\alpha-3)}{2}$.
\end{proof}

\begin{lemma}\label{productRule2} 
    Let $m$ be an integer such that $\alpha\leq m<\alpha+a$. Then
    \begin{align*}
        (1+x^{(m-a)d})(1+x^{\alpha d})^m\prod_{i=1}^{\alpha-1}(1+x^{(\alpha+i)d}) &= (1+x^{(m-a)d}+x^{md})+x^{\alpha d}(1+x^d)^{q-2\alpha+m-a} \\
        & +(1+x^{ad}+x^{(q-\alpha)d})x^{\alpha d} 
        +(1+x^{ad}+x^{(q+m-a-\alpha)d})x^{\alpha d}
    \end{align*}
    for some positive integer $q$.
\end{lemma}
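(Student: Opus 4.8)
The plan is to exploit the fact that in $\B[x]$ two polynomials coincide precisely when they have the same support, so the whole lemma reduces to a computation of supports. Write $k=m-a$ and note that the hypotheses $\alpha\le m<\alpha+a$ together with $a\le \alpha-1$ (since $a\in\mf{A}$ and $\alpha=\max\{\mf{A}\}+1$) give $1\le k<\alpha$; this single chain of inequalities is what makes all the bookkeeping below go through. The natural route is to multiply the identity of Lemma~\ref{productRule1} by $(1+x^{m-a})$: by (the proof of) that lemma, $P:=(1+x^{\alpha})^m\prod_{i=1}^{\alpha-1}(1+x^{\alpha+i})$ has $\supp(P)=\{0\}\cup\{\alpha,\alpha+1,\ldots,q-\alpha\}\cup\{q\}$ with $q=\frac{\alpha(2m+3\alpha-3)}{2}$, so the left-hand side is $(1+x^{k})P$, whose support is $\supp(P)\cup(\supp(P)+k)$.

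First I would compute this union explicitly. The two low points $0$ and $k$ stay isolated below $\alpha$ (here $k<\alpha$ is used); the two consecutive blocks $[\alpha,q-\alpha]$ and $[\alpha+k,q-\alpha+k]$ overlap and merge into the single interval $[\alpha,q-\alpha+k]$ (this needs $\alpha+k\le q-\alpha+1$, which holds since $q$ is quadratic in $\alpha$ and $m$); and the two top points $q$ and $q+k$ remain isolated above the block (again using $k<\alpha$, so that $q>q-\alpha+k$). Hence the left-hand side has support $\{0,k\}\cup[\alpha,q-\alpha+k]\cup\{q,q+k\}$.

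Next I would read off the support of the claimed right-hand side term by term, recalling that $(1+x)^N=1+x+\cdots+x^N$ in $\B[x]$. The summand $1+x^{m-a}+x^m$ contributes $\{0,k,m\}$, with $m=a+k$ lying inside $[\alpha,q-\alpha+k]$; the summand $x^{\alpha}(1+x)^{q-2\alpha+m-a}$ contributes exactly $[\alpha,q-\alpha+k]$; the summand $(1+x^a+x^{q-\alpha})x^{\alpha}$ contributes $\{\alpha,a+\alpha,q\}$; and the last summand $(1+x^a+x^{q+m-a-\alpha})x^{\alpha}$ contributes $\{\alpha,a+\alpha,q+m-a\}=\{\alpha,a+\alpha,q+k\}$. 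Taking the union, the points $\alpha,a+\alpha,m$ are absorbed into the block and what survives is precisely $\{0,k\}\cup[\alpha,q-\alpha+k]\cup\{q,q+k\}$, matching the left-hand side.

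The computation is entirely routine; the only place that requires care—and the step I expect to be the main obstacle—is tracking which exponents fall inside the merged block $[\alpha,q-\alpha+k]$ and which remain isolated. Concretely, one must check that $m,\alpha,a+\alpha$ lie in the block while $0,k,q,q+k$ do not, and that the two shifted intervals genuinely overlap rather than leaving a gap; all of these hinge on the bounds $1\le k<\alpha$, $a\ge 1$, and the largeness of $q$, so I would state these inequalities once at the outset and invoke them as needed rather than re-deriving them in each case.
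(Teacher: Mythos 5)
Your proof is correct and takes essentially the same route as the paper's: both reduce the identity to checking that the two sides of the equation have identical supports in $\B[x]$, with the left-hand side obtained by shifting the support of $(1+x^{\alpha})^m\prod_{i=1}^{\alpha-1}(1+x^{\alpha+i})$ from Lemma~\ref{productRule1} by $m-a$ and merging the overlapping intervals. The paper's proof simply writes out the resulting expansion, whereas you make the interval-merging inequalities (all consequences of $1\le m-a<\alpha$ and the size of $q$) explicit; no gap either way.
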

\begin{proof} 
    Expanding the product, we have
    \begin{align*}
        (1+x^{(m-a)d})(1+x^{\alpha d})^m\prod_{i=1}^{\alpha-1}(1+x^{(\alpha+i)d}) &= 1+x^{(m-a)d}+x^{\alpha d}+\cdots+x^{(q-\alpha+m-a)d}+x^{qd}+x^{(q+m-a)d}\\
        & =  1 + x^{(m-a)}+x^{md}+\sum_{i=0}^{q-2\alpha+m-a}x^{(\alpha+i)d}+x^{qd}+x^{(q+m-a)d}
        \\
        &= (1+x^{(m-a)d}+x^{md})+x^{\alpha d}(1+x^d)^{q-2\alpha+m-a} \\
        & +(1+x^{ad}+x^{(q-\alpha)d})x^{\alpha d} 
        +(1+x^{ad}+x^{(q+m-a-\alpha)d})x^{\alpha d}
    \end{align*}
    where $q=m\alpha+\sum_{i=1}^{\alpha-1}(\alpha+i)$.
\end{proof}

\begin{lemma}\label{productRule3}
    Let $m$ be an integer such that $a<m<\alpha$. Let $k$ be the smallest positive integer such that $km-a\geq \alpha$. Then 
    \begin{align*}
        (1+x^{(m-a)d})(1+x^{md})^{k-1}(1+x^{\alpha d})^{m} \prod_{i=1}^{\alpha-1}(1+x^{(\alpha+i)d}) &= (1+x^{(m-a)d}+x^{md}) + x^{\alpha d}(1+x^d)^{q-2\alpha+km-a} 
        \\
        &+(1+x^{ad})\left(\sum_{i=1}^{k-1}(x^{(im-a)d} + x^{(q+(im-a))d} )\right)
        \\
        &+(1+x^{ad}+x^{(a+\alpha)d})x^{(q-\alpha+km-2a)d}.
    \end{align*}
\end{lemma}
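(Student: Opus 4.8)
The identity lives in $\B[x]$, where addition is idempotent, so two polynomials are equal precisely when they have the same support. It therefore suffices to compute the set of exponents occurring on the left-hand side and to check that it coincides with the support of the right-hand side; overlaps between the grouped pieces on the right are harmless. Writing the support of a product as the Minkowski sum of the supports of its factors, I set $S_Q=\supp\big((1+x^{m-a})(1+x^m)^{k-1}\big)$ and $S_P=\supp\big((1+x^{\alpha})^m\prod_{i=1}^{\alpha-1}(1+x^{\alpha+i})\big)$, so the left-hand side has support $S_Q+S_P$.

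First I would record $S_Q=\{jm:0\le j\le k-1\}\cup\{jm-a:1\le j\le k\}$; listing these in increasing order $0,\,m-a,\,m,\,2m-a,\,2m,\dots,(k-1)m,\,km-a$, the consecutive gaps are alternately $a$ and $m-a$, both strictly less than $\alpha$. By the minimality of $k$ we have $(k-1)m-a<\alpha\le km-a$; the elements of $S_Q$ lying below $\alpha$ form the bottom bump, while any $S_Q$-element that is already $\ge\alpha$ (in particular $km-a$) is absorbed into the middle block below. As in the proofs of Lemma~\ref{productRule1} and Lemma~\ref{productRule2}, $S_P$ has the shape $\{0\}\cup R\cup\{q\}$ with $q=\frac{\alpha(2m+3\alpha-3)}{2}$, where $R$ is a long block of exponents running upward from $\alpha$ to $q-\alpha$ and $q$ is isolated by a gap of length $\alpha-1$ just beneath it; the key difference from Lemma~\ref{productRule1} is that here $m<\alpha$, so $R$ need not be a solid interval.

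Next I would convolve and split $S_Q+S_P$ into three ranges. The part below $\alpha$ can only come from pairing $S_Q$ with $0\in S_P$ (the smallest nonzero element of $S_P$ is $\alpha$), giving $S_Q$ with its top element $km-a$ pushed into the middle; this is exactly the support of $(1+x^{m-a}+x^m)+(1+x^a)\sum_{i=1}^{k-1}x^{im-a}$, using $(1+x^a)x^{im-a}=x^{im-a}+x^{im}$. The middle range is the solid interval $[\alpha,\,q-\alpha+km-a]$, which is precisely $\supp\big(x^{\alpha}(1+x)^{q-2\alpha+km-a}\big)$. The part above $q-\alpha+km-a$ can only arise by pairing $S_Q$ with the isolated top $q\in S_P$, namely $\{q+s:s\in S_Q,\ s>km-a-\alpha\}$; these exponents are packaged as $(1+x^a)\sum_{i=1}^{k-1}x^{q+im-a}$ together with the top term $(1+x^a+x^{a+\alpha})x^{q-\alpha+km-2a}$, whose only exponent exceeding $q-\alpha+km-a$ is the overall top degree $q+km-a$, its other two exponents $q-\alpha+km-2a$ and $q-\alpha+km-a$ lying inside the middle block. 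Matching the three ranges term by term, and invoking idempotency to discard the redundant overlapping exponents, yields the stated identity.

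The main obstacle is the middle range. Unlike in Lemma~\ref{productRule1}, the factor $(1+x^{\alpha})^m$ on its own leaves holes (for instance $2\alpha$ can be missing when $m$ is small), so one cannot simply quote that $R$ is an interval. The point is that after adding $S_Q$, whose consecutive gaps $a$ and $m-a$ are both $<\alpha$, every gap of $S_P$ inside $[\alpha,q-\alpha]$ is bridged and the Minkowski sum $S_Q+S_P$ becomes the full interval $[\alpha,\,q-\alpha+km-a]$. Making this gap-filling precise---showing that each integer in this range is realized as $s+p$ with $s\in S_Q$ and $p\in S_P$---and carefully handling the two endpoints (the absorption of $km-a$ at the bottom and the isolation of $q$ at the top) is the only delicate part; everything else is the bookkeeping already carried out in Lemmas~\ref{productRule1} and~\ref{productRule2}.
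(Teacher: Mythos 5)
Your overall strategy---compute the support of the left-hand side as the Minkowski sum $S_Q+S_P$ and match it range-by-range against the right-hand side, discarding overlaps by idempotency---is exactly the route the paper takes: its proof simply writes out the expansion as a bottom bump, a solid middle block $\sum_{i=0}^{q-2\alpha+km-a}x^{\alpha+i}$, and a top bump. Your bookkeeping for the bottom range, the top range, and the absorption of $x^{km-a}$ and of $x^{q}$ into the middle block is correct and, frankly, more carefully organized than the paper's.

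There is, however, a genuine gap at precisely the point you flag and then defer: you never prove that the middle range is the full interval $[\alpha,\,q-\alpha+km-a]$, and the heuristic you offer does not work as stated. Knowing that consecutive elements of $S_Q$ differ by $a$ or $m-a<\alpha$ does not bridge a hypothetical hole $(p_1,p_2)$ of $S_P$: to realize an integer $n$ strictly inside such a hole as $s+p$ you would need a particular element of $S_Q$ to land on a particular element of $S_P$, and small gaps in $S_Q$ alone guarantee nothing of the sort. The clean fix is to show that $S_P$ has no holes in the first place, i.e.\ that $S_P=\{0\}\cup[\alpha,q-\alpha]\cup\{q\}$ even though $m<\alpha$ (contrary to your worry that ``$R$ need not be a solid interval''): group the subset sums of the exponent multiset $\{\alpha,\dots,\alpha,\alpha+1,\dots,2\alpha-1\}$ by cardinality $t$; each cardinality class is a full interval of integers, because the offsets $\{0^{(m)},1,\dots,\alpha-1\}$ are a run of consecutive integers together with repeated zeros, and for $1\le t$ the maximum of class $t$, namely $t\alpha+\sum_{i=\alpha-\min(t,\alpha-1)}^{\alpha-1}i$, is at least the minimum of class $t+1$ minus one whenever $t+1\le 2\alpha-1$, which holds here since the total cardinality is $m+\alpha-1\le 2\alpha-2$. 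Once $S_P$ is solid, $S_Q+[\alpha,q-\alpha]=[\alpha,q-\alpha+km-a]$ follows at once from the gaps of $S_Q$ being far smaller than $q-2\alpha$, and your argument closes. (In fairness, the paper's own proof also asserts the solid middle block without justification, so supplying this step would make your write-up strictly more complete than the original.)
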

\begin{proof}
    Assume $m>a$. Let the $g$ denote the left side of the equation. Expanding the product, we have
    \begin{align*}
        g &= 1+ x^{(m-a)d}+x^{md}+x^{(2m-a)d} +x^{2md} +\cdots + x^{((k-1)m-a)d}
        \\
        & +x^{(k-1)md}+x^{\alpha d}+x^{(\alpha+1)d} + x^{(\alpha+2)d} + \cdots+x^{(km-a)d}+\cdots 
        \\
        & +x^{(q-\alpha+km-a)d}+x^{(q+m-a)d} + x^{(q+m)d}+x^{(q+2m-a)d}+x^{(q+2m)d}
        \\
        &+\cdots+ x^{(q+(k-1)m-a)d}+ x^{(q+(k-1)m)d} +x^{(q+km-a)d}
        \\
        &= 1+\sum_{i=0}^{k-1} (x^{imd} + x^{((i+1)m-a)d}) + \sum_{i=0}^{q-2\alpha+km-a} x^{(\alpha+i)d} + \sum_{i=0}^{k-1} (x^{(q+im)d} + x^{(q+(i+1)m-a)d})
        \\
        &= (1+x^{(m-a)d}+x^{md}) + x^{\alpha d}(1+x^d)^{q-2\alpha+km-a} 
        +(1+x^{ad})\left(\sum_{i=1}^{k-1}(x^{(im-a)d} + x^{(q+(im-a))d} )\right)
        \\
        &+(1+x^{ad}+x^{(a+\alpha)d})x^{(q-\alpha+km-2a)d},
    \end{align*}
    where $q=m\alpha+\sum_{i=1}^{\alpha-1}(\alpha+i)$. Note that $x^{qd}$ lies in the sum $\sum_{i=0}^{q-2\alpha+km-a} x^{(\alpha+i)d}$, since $km-a\geq\alpha$. 
\end{proof}

\begin{prop}\label{1+x^d+x^{md} contained}
    If there exists a positive integer $m$ for which $1+x^d+x^{md}\in \mc{J}$ but $(m-1)d,md\notin \mf{A}$ then the set $\{1+x^d+x^s \mid s\geq 1\}$ is contained in $\mc{J}$.
\end{prop}
\begin{proof}
    By Lemma~\ref{d nmid a}, we have  $\{1+x^d+x^s\mid d\nmid s\}\subseteq \mc{J}$. So, we only need to show the inclusion for $d\mid s$. Suppose $m$ is the smallest positive integer such that $(m-1)d,md\notin\mf{A}$ and $1+x^d+x^{md}\in \mc{J}$ and let $n$ be the smallest positive integer for which $(n-1)d,nd\notin\mf{A}$ with $1+x^d+x^{nd}\notin \mc{J}$. By Lemma~\ref{{m-a,m}_in_J}, we know $1+x^{(n-1)d}+x^{nd}\in \mc{J}$. Suppose $n>m$ and consider the following cases:
    \vspace{0.1in}
    
    \noindent\textbf{Case 1:} Suppose $n-1>\alpha$. By Lemma~\ref{productRule1}, 
    \[(1+x^{\alpha d})^n\prod_{i=1}^{\alpha-1}(1+x^{(\alpha+i)d}) = (1+x^{(n-1)d}+x^{nd}) + x^{\alpha d}(1+x^d)^{q-2\alpha} + (1+x^d+x^{(n-1)d})x^{(q-n+1)d}\]
    lies in $\mc{J}$ as $1+x^d+x^{(n-1)d}\in\mc{J}$ by hypothesis. However, since $\mc{J}$ is a prime ideal, this implies $(\alpha+i)d\in\mf{A}$ for some $i$, which is a contradiction.
    \vspace{0.1in}
    
    \noindent \textbf{Case 2:} Suppose $n-1=\alpha$. Since $m<n-1$, let $k>1$ be the smallest integer such that $km-1\geq n-1$. Now, adjusting Lemma~\ref{productRule3}, we get
    \begin{align*}
        (1+x^{(m-1)d})(1+x^{md})^{k-1}(1+x^{\alpha d})^{m} \prod_{i=1}^{\alpha-1}(1+x^{(\alpha+i)d})
        &= (1+x^{(n-1)d}+x^{nd}) + x^{\alpha d}(1+x^d)^{q-2\alpha+km-1} 
        \\
        & +  
        (1+x^{d})\left(\sum_{i=1}^{k-1}(x^{(im-1)d} + x^{(q+im-1)d} )\right)
        \\
        & + (1+x^d+x^{md})x^{(q+(k-1)m-1)d}
    \end{align*}
    lies in $\mc{J}$, giving a contradiction. 
    \vspace{0.1in}
    
    \noindent \textbf{Case 3:} Suppose $n-1<\alpha$. Let $k>1$ be the smallest positive integer such that $kn-1\geq\alpha$. By adjusting Lemma~\ref{productRule3}, we have
    \begin{align*}
        (1+x^{(n-1)d})(1+x^{nd})^{k-1}(1+x^{\alpha d})^{n} \prod_{i=1}^{\alpha-1}(1+x^{(\alpha+i)d})&= (1+x^{(n-1)d}+x^{nd}) + x^{\alpha d}(1+x^d)^{q-2\alpha+kn-1} 
        \\
        & +  
        (1+x^d)\left(\sum_{i=1}^{k-1}(x^{(in-1)d} + x^{(q+in-1)d} )\right)
        \\
        & + (1+x^d+x^{(\alpha+1)d})x^{(q-\alpha+kn-2)d}.
    \end{align*} 
    By case 2, we have $1+x^d+x^{(\alpha+1)d}\in \mc{J}$. Hence, the above product is in $\mc{J}$, a contradiction. 
    \vspace{0.1in}

    Finally, let $n<m$. Let $k$ be the smallest integer such that $kn-1\geq \alpha$. By adjusting Lemma~\ref{productRule3}, we get
    \begin{align*}
        (1+x^{(n-1)d})(1+x^{nd})^{k-1}(1+x^{\alpha d})^{n} \prod_{i=1}^{\alpha-1}(1+x^{(\alpha+i)d})&= (1+x^{(n-1)d}+x^{nd}) + x^{\alpha d}(1+x^d)^{q-2\alpha+kn-1} 
        \\
        & +  
        (1+x^d)\left(\sum_{i=1}^{k-1}(x^{(in-1)d} + x^{(q+in-1)d} )\right)
        \\
        & + (1+x^d+x^{(\alpha+1)d})x^{(q-\alpha+kn-2)d}.
    \end{align*} 
    It suffices to show that $1+x^d+x^{(\alpha+1)d}\in \mc{J}$. If $m-1<\alpha$, then by case 2, the claim holds. If $m-1>\alpha$, then by Lemma~\ref{productRule1}, 
    \[(1+x^{\alpha d})^m\prod_{i=1}^{\alpha-1}(1+x^{(\alpha+i)d}) = (1+x^{\alpha d}+x^{(\alpha+1)d}) + x^{\alpha d}(1+x^d)^{q-2\alpha} + (1+x^d+x^{md})x^{(q-m)d}\in \mc{J}.\]
    Hence, a contradiction and the claim holds. Therefore,
    \[\{1+x^d+x^s\mid s\geq 1\}\subseteq \mc{J}.\]
\end{proof}

\noindent Using the same strategy, we can prove a stronger version, for which we omit the proof. 

\begin{prop}\label{md<ad}
    If there exists a positive integer $m>a$ for which $1+x^{ad}+x^{md}\in \mc{J}$ but $(m-a)d,md\notin \mf{A}$ then the set $\{1+x^{ad}+x^s \mid s\geq 1\}$ is contained in $\mc{J}$.
\end{prop}
\begin{proof}
    If $m<a$ and $ad\in\mf{A}$ with $md\notin\mf{A}$, then we have
    \[(1+x^{(a-m)d})x^{md}+1+x^{ad}=1+x^{ad}+x^{md}\in\mc{J}, \] since $(a-m)d\in\mf{A}$. The proof of the case $m>a$ follows similarly to the proof of Proposition~\ref{1+x^d+x^{md} contained}. 
\end{proof}

\begin{prop}
    Let $|\mf{A}|>1$. For any $ad\in\mf{A}\backslash\{d\}$, we have
    \[\{1+x^{ad}+x^s\mid s\geq 1\}\subseteq \mc{J}\]
    if and only if
    \[\{1+x^d+x^s\mid s\geq 1\}\subseteq\mc{J}.\]
\end{prop}
\begin{proof}
    Suppose $\{1+x^{ad}+x^s\mid s\geq 1\}\subseteq \mc{J}$ for any $ad\in\mf{A}\backslash\{d\}$. By Lemma~\ref{d nmid a}, we have $\{1+x^d+x^s\mid d\nmid s\}\subseteq\mc{J}$. Let $m$ be the smallest integer such that $1+x^d+x^{md}\notin\mc{J}$. Observe that we must have $m\geq 3$. By Lemma~\ref{{m-a,m}_in_J}, $1+x^{(m-1)d}+x^{md}\in\mc{J}$. Consider the following cases:
    
    \vspace{0.1in}
    \noindent \textbf{Case 1:} If $m-1> \alpha$, then by Lemma~\ref{productRule1}, we have
    \[(1+x^{\alpha d})^{m}\prod_{i=1}^{\alpha-1}(1+x^{(\alpha+i)d}) = (1+x^{(m-1)d}+x^{md}) + x^{\alpha d}(1+x^d)^{q-2\alpha} + (1+x^d+x^{(m-1)d})x^{(q-m+1)d}\in\mc{J},\]
    a contradiction as $1+x^d+x^{(m-1)d}\in \mc{J}$ by assumption.

    \vspace{0.1in}
    \noindent \textbf{Case 2:} For the case when $m-1\leq \alpha$. Let $k>1$ be the smallest integer such that $km-1> \alpha$. By adjusting Lemma~\ref{productRule3}, the element
    \begin{align*}
        (1+x^{(m-1)d})(1+x^{md})^{k-1}(1+x^{\alpha d})^{m} \prod_{i=1}^{\alpha-1}(1+x^{(\alpha+i)d}) &= (1+x^{(m-1)d}+x^{md}) + x^{\alpha d}(1+x^d)^{q-2\alpha+km-1} 
        \\
        &+(1+x^{d})\left(\sum_{i=1}^{k-1}(x^{(im-1)d} + x^{(q+(im-1))d} )\right)
        \\
        &+ (1+x^{(\alpha-1)d}+x^{(q-\alpha+km-1)d})x^{\alpha d}
    \end{align*}
    lies in $\mc{J}$ as $1+x^{(\alpha-1)d}+x^{(q-\alpha+km-1)d}\in \mc{J}$ by assumption. This gives a contradiction.

    \vspace{0.1in}    
    \noindent Therefore, we must have $\{1+x^d+x^s\mid s\geq 1\}\subseteq \mc{J}$.

    \vspace{0.1in}
    \indent Conversely, suppose $\{1+x^d+x^s\mid s\geq 1\}\subseteq\mc{J}$ and there exists $ad\in\mf{A}\backslash\{d\}$ and $m$ such that $1+x^{ad}+x^{md}\notin \mc{J}$. Let $m$ be the smallest such integer. By the argument in Proposition~\ref{md<ad}, we must have $m>a$. Also, by Lemma~\ref{{m-a,m}_in_J} we have $1+x^{(m-a)d}+x^{md}\in \mc{J}$.
    Consider the following cases:

    \vspace{0.1in}
    \noindent\textbf{Case 1:} If $m-a\geq\alpha$ then by Lemma~\ref{productRule1}, $(1+x^{\alpha d})^{m}\prod_{i=1}^{\alpha-1}(1+x^{(\alpha+i)d})\in\mc{J}$, a contradiction.

    \vspace{0.1in}
    \noindent \textbf{Case 2:} Suppose $\alpha\leq m<\alpha+a$. Recall that, we have $(m-a)d\notin\mf{A}$. By Lemma~\ref{productRule2}, $(1+x^{(m-a)d})(1+x^{\alpha d})^m\prod_{i=1}^{\alpha-1}(1+x^{(\alpha+i)d})\in\mc{J}$, a contradiction.

    \vspace{0.1in}
    \noindent \textbf{Case 3:} Finally, suppose $m<\alpha$. Recall that $md,(m-a)d\notin\mf{A}$. Let $k>1$ be the smallest integer such that $km-a\geq \alpha$. By adjusting Lemma~\ref{productRule3}, we have
    \begin{align*}
        (1+x^{(m-a)d})(1+x^{md})^{k-1}(1+x^{\alpha d})^{m} \prod_{i=1}^{\alpha-1}(1+x^{(\alpha+i)d}) &= (1+x^{(m-a)d}+x^{md}) + x^{\alpha d}(1+x^d)^{q-2\alpha+km-a} 
        \\
        &+(1+x^{ad})\left(\sum_{i=1}^{k-1}(x^{(im-a)d} + x^{(q+(im-a))d} )\right)
        \\
        &+(1+x^d+x^{(q-\alpha+km-a)d})x^{\alpha d}
    \end{align*}
    lies in $\mc{J}$, a contradiction.
    
    \vspace{0.1in}    
    \noindent Hence by Proposition~\ref{md<ad}, $\{1+x^{ad}+x^s\mid s\geq 1\}$ is contained in $\mc{J}$.
\end{proof}

\begin{lemma}\label{Category II for d=1}
    Let $ad\in\mf{A}$ and let $m>a$ be an integer such that $(m-a)d,md\notin\mf{A}$. If $1+x^{ad}+x^{md}\in\mc{J}$ then $1+x^{(m-a)d}+x^{md}\notin\mc{J}$. Similarly, if $1+x^{(m-a)d}+x^{md}\in\mc{J}$ then $1+x^{ad}+x^{md}\notin\mc{J}$. In particular, we have exactly one of the following inclusions: either
    \[\{1+x^{ad}+x^{s}\mid ad\in\mf{A},\, s\geq 1\}\subseteq \mc{J}\quad\textrm{or}\quad \{1+x^{(m-a)d}+x^{md}\mid ad\in\mf{A},\,m\geq 1\}\subseteq \mc{J}.\]
\end{lemma}
\begin{proof}
    If both $1+x^{ad}+x^{md}\in\mc{J}$ and $1+x^{(m-a)d}+x^{md}\in\mc{J}$, then by Lemma~\ref{productRule1}, Lemma~\ref{productRule2}, and Lemma~\ref{productRule3}, we have a contradiction. Therefore, by the previous propositions, only one of the inclusions is possible.
\end{proof}

\begin{theorem}\label{J=J_A}
    Let $\mc{J}$ be a prime ideal of $\B[x]$ in category (II). If 
    \[\mc{J}_{\mf{A}}=\langle\{1+x^a+x^m\mid a\in\mf{A},m\geq 0\}\rangle\subseteq \mc{J}\]
    then $\mc{J}=\mc{J}_{\mf{A}}$.
\end{theorem}
\begin{proof}
    Suppose 
    \[f(x)\in \mc{J}\backslash \langle\{1+x^a+x^m\mid a\in\mf{A},m\geq 0\}\rangle.\]
    be an irreducible polynomial. Without loss of generality, we can assume $1\in \supp(f)$. Also, $\supp(f)$ cannot contain the term $x^a$ for any $a\in\mf{A}$, since such terms would imply $f\in\<\{1+x^a+x^m\mid a\in\mf{A},\,m\geq 0\}\>$. Let $f(x)=1+x^{m_1d}+\cdots+x^{m_rd}$, with $m_1<\cdots<m_r$. If $m_1\geq \alpha$, then we reach a contradiction by Lemma~\ref{productRule1}. Now, let $\ell\in\{1,\ldots,r\}$ be the largest index such that $m_{\ell}<\alpha$. Consider the following product
    \begin{align*}
        \prod_{j=1}^{\ell}(1+x^{m_jd})^{\alpha}(1+x^{\alpha d})^{m_r^r}\prod_{i=1}^{\alpha-1}(1+x^{(\alpha+i)d})&= 1+ \sum_{A\subseteq\{1,\ldots,\ell\},\,T=\sum_{k\in A}c_km_k<\alpha} x^{Td} + 
        \\
        &x^{\alpha d} + x^{(\alpha+1)d} + \cdots + x^{(q-\alpha+\alpha\sum_{i=1}^{\ell} m_i)d}\\
        &+x^{qd}\left(1+ \prod_{j=1}^{\ell}(1+x^{m_jd})^{\alpha}\right),
    \end{align*}
    where $q=m_r^r\alpha+\sum_{i=1}^{\alpha-1}(\alpha+i)$. Note that the above product can also be written as
    \begin{align*}
        &f(x)\left(1+\sum_{A\subseteq\{1,\ldots,\ell\},\,T=\sum_{k\in A}c_km_k<\alpha} x^{Td}\right)+x^{\alpha d}(1+x^d)^{q-2\alpha+\alpha\sum_{i=1}^{\ell} m_i} 
        \\
        & + \left(1+x^d+x^{(q-\alpha)d}\prod_{j=1}^{\ell}(1+x^{m_jd})^{\alpha} \right)x^{\alpha d},
    \end{align*}
    which is an element of $\mc{J}$ as $\{1+x^d+x^s\mid s\geq 1\}\subseteq \mc{J}$. But then this gives us a contradiction, as $\mc{J}$ is a prime ideal. Hence, we have $\mc{J }=\mc{J}_{\mf{A}}.$
\end{proof}

For the remainder of the section, we say $f(x)=1+x^{m_1d}+\cdots+x^{m_rd}$ satisfy $\star$ if $m_id\in\mf{A}$ for some $i$ but $\{m_1d,\ldots,m_rd\}\not\subset \mf{A}$ and there exists $t$ such that $|m_t-m_j|d\notin\mf{A}$ for all $j=1,\ldots,r$.  

\begin{theorem}
    Let $\mc{J}$ be a prime ideal of $\B[x]$ in category (II) such that 
    \[\langle\{1+x^{m}+x^{m+a}:a\in\mf{A}, m\geq 0\}\rangle\subseteq \mc{J}.\]
    Then 
    \[\mc{J}= \langle\{1+x^{m}+x^{m+a}:a\in\mf{A}, m\geq 0\},\mc{Q}\rangle\]
    for some $\mc{Q}\subseteq\{f\mid f\textrm{ is irreducible and satisfy $\star$}\}$.
\end{theorem}
\begin{proof}
    Let $f=1+x^{m_1d}+\cdots+x^{m_rd}\in \mc{J}$ be irreducible. We claim that 
    \[f\notin \langle \{1+x^{m}+x^{m+a}:a\in\mf{A}, m\geq 0\}\rangle\]
    if and only if $f$ satisfy $\star$. Let $f\notin \langle \{1+x^{m}+x^{m+a}:a\in\mf{A}, m\geq 0\}\rangle$. Note that we must have $\{m_1d,\ldots,m_rd\}\not\subset \mf{A}$. Suppose $m_id\notin\mf{A}$ for all $i$. Then by the same argument as in Theorem~\ref{J=J_A}, we have a contradiction. Also, if for all $t$, there exists $j_t$ such that $|m_t-m_{j_t}|d\in \mf{A}$ then 
    \[f=\sum_{t=1}^r(1+x^{m_td}+x^{m_{j_td}})\in \langle \{1+x^{m}+x^{m+a}:a\in\mf{A}, m\geq 0\}\rangle.\]
    The opposite direction is straightforward. Hence, $\langle\{1+x^{m}+x^{m+a}:a\in\mf{A}, m\geq 0\},f\rangle\subseteq \mc{J}$.
\end{proof}

\subsection{The case $d\notin\mf{A}$}   

\begin{theorem}\label{d not in A and A inf}
    Let $\mc{J}$ be a prime ideal of $\B[x]$ in category (II). Then 
    \[\mc{J}=\mc{J}_{\mf{A}}.\]
\end{theorem}
\begin{proof}
    Since $d\notin\mf{A}$, we have $\N-\mf{A}=\<d\>$. For any $m> a$, if $m,m-a\notin \mf{A}$, then $d|a$, a contradiction. So, we must have either $m\in\mf{A}$ or $m-a\in\mf{A}$. So, by Lemma~\ref{m or m-a in A implies containtment}, we have 
     \[\{1+x^{a}+x^{s}\mid a\in\mf{A},\, s\geq 1\}\subseteq \mc{J}\quad\textrm{and}\quad \{1+x^{m}+x^{m+a}\mid a\in\mf{A},\,m\geq 1\}\subseteq \mc{J}.\]
    In particular, $\mc{J}_{\mf{A}}\subseteq\mc{J}$. Also, it is easy to observe that 
    \[\mc{J}_{\mf{A}}\supset \{1+x^{m}+x^{m+a}\mid a\in\mf{A},\,m\geq 1\}.\]
    Now, let $f=1+x^{m_1}+\cdots+x^{m_r}\in\mc{J}\backslash\mc{J}_{\mf{A}}$. We must have $m_i\notin\mf{A}$ for all $i$, which implies $d|m_i$ for all $i$. So, there exists large $N$ and $M$ such that $f\cdot(1+x^d)^N=(1+x^d)^M\in \mc{J}$. But then this is a contradiction as $d\notin\mf{A}$. Hence, $\mc{J}=\mc{J}_{\mf{A}}$.
\end{proof} 

This completes the classification of prime ideals in the polynomial semiring $\B[x]$. Combining our results, we get Theorem \ref{D}.

\begin{corollary}
    If $d=1$, and let $\mc{J}$ be a prime ideal of $\B[x]$ in category (II). Then either $\mc{J}=\mc{J}_{\mf{A}}$ or
    \[\mc{J}=\langle \{1+x^m+x^{m+a} \mid a\in\mf{A},\,m\geq 0\},\mc{Q}\rangle\]
    for some $\mc{Q}\subseteq\{f=1+\sum_{i=1}^rx^{m_i}\mid \textrm{$f$ is irreducible and satisfy $\star$}\}.$
\end{corollary}

\section{Prime ideals of $\B[x_1,\cdots,x_n]$} \label{Future direction}

Let $\N=\{0,1,2,\ldots\}$. In this section, we provide partial results for prime ideals in $\B[x_1,\cdots,x_n]$. 
Throughout the section, we use the notation $\vec{a}$ to denote elements of $\N^n$ and by $X^{\vec{a}}$ we mean $x_1^{a_1}x_2^{a_2}\cdots x_n^{a_n}$ when $\vec{a}=(a_1,\cdots,a_n)$. A subset $\mf{A}\subseteq\N^n$ is said to be a \emph{prime subset} if for any $\vec{a},\vec{b}\in\N^n$ such that $\vec{a}+\vec{b}\in \mf{A}$, we have $\vec{a}\in \mf{A}$ or $\vec{b}\in \mf{A}$. 

We now generalize the methods due to F. Alarcón and D. Anderson \cite{AA94} to the multivariable case. If $\mf{A}=\emptyset$, then we use the convention $\mc{I}_{\emptyset}=\langle x_1,\cdots,x_n\rangle$.

\begin{theorem}
    Let $\mf{A}$ be a subset in $\N^n$. Consider the polynomial semiring $\B[x_1,x_2,\cdots,x_n]$. To the set $\mf{A}$ we associate the ideal $\mc{I}_\mf{A}$ of $\B[x_1,x_2,\cdots,x_n]$ defined as
    \[\mc{I}_\mf{A}:=\langle x_1,x_2,\cdots,x_n, \{1+X^{\vec{a}}\mid \vec{a}=(a_1,\cdots,a_n)\in \mf{A}\} \rangle.\]
    Then the ideal $\mc{I}_\mf{A}$ can also be described as
    \[\mc{I}_\mf{A}=\left\{ f = \sum_{\substack{\vec{a}\in\N^n, \\ \textrm{finite}}} X^{\vec{a}} \mid \textrm{ if $1\in\supp(f)$}  \textrm{ then } \exists\, \vec{a}\in \mf{A} \textrm{ such that } X^{\vec{a}}\in\supp(f)  \right\}.\]
    Then 
    \begin{enumerate}
        \item The ideal $\mc{I}_\mf{A}$ is prime if and only if $\mf{A}$ is a prime subset of $\N^n$.
        \item If $\mc{P}$ is a prime ideal containing $\langle x_1,x_2,\cdots,x_n\rangle$, then $\mc{P}=\mc{I}_\mf{A}$ where 
        \[\mf{A}=\{\vec{a}\in\N^n\mid 1+X^{\vec{a}}\in \mc{P}\}\]
        is a prime subset of $\N^n$. 
    \end{enumerate}
\end{theorem}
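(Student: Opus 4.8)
The plan is to prove the three assertions (the alternative description of $\mc{I}_\mf{A}$, part (1), and part (2)) in sequence, adapting the one-variable arguments of Alarc\'on--Anderson and the structural lemmas already established in the excerpt. First I would verify the explicit description of $\mc{I}_\mf{A}$. The containment $\supseteq$ is routine: if $1\in\supp(f)$ and some $X^{\vec a}\in\supp(f)$ with $\vec a\in\mf A$, then $f$ is a $\B$-linear combination of the generators $x_i$ and $1+X^{\vec a}$, since every monomial other than $1$ is divisible by some $x_i$ and hence lies in $\langle x_1,\dots,x_n\rangle$. For $\subseteq$, I would check that the right-hand set $\mc{R}$ is an ideal containing all generators and is closed under addition and multiplication by monomials; the key point is that multiplication by $X^{\vec b}$ with $\vec b\neq \vec 0$ kills the condition ``$1\in\supp$'' (the product has no constant term, so it lies in $\langle x_i\rangle\subseteq\mc{R}$), while multiplication by $1$ is trivial, and addition preserves membership because adding two polynomials with constant term gives constant term $1$ (idempotency) and the supports only grow. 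Since $\mc{I}_\mf{A}$ is the smallest ideal containing the generators, $\mc{I}_\mf{A}\subseteq\mc{R}$.

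For part (1), I would argue both directions. If $\mf A$ is prime, I show $\mc{I}_\mf{A}$ is prime: suppose $fg\in\mc{I}_\mf{A}$ but $f,g\notin\mc{I}_\mf{A}$. Using the description, $f\notin\mc{I}_\mf{A}$ forces $1\in\supp(f)$ yet $\supp(f)$ contains no $X^{\vec a}$ with $\vec a\in\mf A$, and similarly for $g$; in particular $1\in\supp(fg)$. Since $fg\in\mc{I}_\mf{A}$, there is $\vec c\in\mf A$ with $X^{\vec c}\in\supp(fg)$. The delicate point, exactly as in the single-variable setting, is that over $\B$ a monomial appears in $\supp(fg)$ iff it can be written as $\vec a+\vec b$ with $X^{\vec a}\in\supp(f)$, $X^{\vec b}\in\supp(g)$ (no cancellation occurs, since $\B$ has no additive inverses). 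So $\vec c=\vec a+\vec b$ for such $\vec a,\vec b$, and primeness of $\mf A$ gives $\vec a\in\mf A$ or $\vec b\in\mf A$, contradicting the choice of $f,g$. Conversely, if $\mf A$ is not prime, pick $\vec a+\vec b\in\mf A$ with $\vec a,\vec b\notin\mf A$; I would exhibit an analogue of the one-variable product identity witnessing non-primeness. The natural candidate is the multivariable version of Equation~(\ref{eq: product}): with $r,s$ replaced by the vectors $\vec a,\vec b$, one checks that $(1+X^{\vec a})^{s}(1+X^{\vec b})^{s}(1+X^{\vec a+\vec b})=(1+X^{\vec a})^{s+1}(1+X^{\vec b})^{s+1}$ for a suitable exponent (here the combinatorial bookkeeping of which lattice points $i\vec a+j\vec b$ appear is essentially identical to the scalar case since everything happens inside the rank-$2$ sub-semigroup generated by $\vec a,\vec b$), so the left side lies in $\mc{I}_\mf{A}$ while neither factor $1+X^{\vec a}$ nor $1+X^{\vec b}$ does.

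For part (2), let $\mc{P}\supseteq\langle x_1,\dots,x_n\rangle$ be prime and set $\mf A=\{\vec a\mid 1+X^{\vec a}\in\mc{P}\}$. That $\mf A$ is a prime subset follows from Theorem~\ref{prime ideal -> prime subset} reasoning generalized to $\N^n$ (indeed the non-primeness witness above shows directly that if $\vec a+\vec b\in\mf A$ and $\vec a,\vec b\notin\mf A$ then $\mc{P}$ fails primeness). The inclusion $\mc{I}_\mf{A}\subseteq\mc{P}$ is immediate since each generator $x_i$ and each $1+X^{\vec a}$ ($\vec a\in\mf A$) lies in $\mc{P}$ by hypothesis and by definition of $\mf A$. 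For the reverse inclusion $\mc{P}\subseteq\mc{I}_\mf{A}$, take $f\in\mc{P}$; if $1\notin\supp(f)$ then $f\in\langle x_1,\dots,x_n\rangle\subseteq\mc{I}_\mf{A}$, so assume $1\in\supp(f)$, and I must produce $\vec a\in\mf A$ with $X^{\vec a}\in\supp(f)$. This is the main obstacle, and I expect it to require a multivariable analogue of the identity $f(x)(1+x)^N=(1+x)^{2N}$ used in the preliminaries. The plan is to show that if $f=1+X^{\vec m_1}+\cdots+X^{\vec m_r}\in\mc{P}$ with no $\vec m_i\in\mf A$, one can multiply by a suitable power of $\prod_i(1+X^{e_i})$ (or a product over the generating monomials) to force $\mc{P}$ to contain some $1+X^{\vec a}$ with $\vec a\notin\mf A$, contradicting the definition of $\mf A$ together with primeness. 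The hardest part will be controlling the support of these products in $n$ variables, where the convex/lattice geometry of $\supp(f)$ replaces the simple interval arithmetic of the one-variable proofs; I would isolate this as a lemma asserting that for any $f$ with $1\in\supp(f)$ there is a monomial multiplier making the product a sum of the form $\prod(1+X^{\vec a_i})$ over elements forced into $\mc{P}$, and then invoke primeness of $\mc{P}$ to extract the desired $1+X^{\vec a}$.
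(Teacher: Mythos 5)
Your verification of the explicit description of $\mc{I}_{\mf{A}}$ and your argument that $\mf{A}$ prime implies $\mc{I}_{\mf{A}}$ prime (no cancellation over $\B$, so every exponent of $fg$ is a sum of exponents of $f$ and $g$) match the paper and are fine. The two remaining steps both have problems.

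First, the multivariable power identity you propose for the non-primeness witness is false. For $\vec a=(1,0)$, $\vec b=(0,1)$, $s=1$ one has $(1+x)(1+y)(1+xy)=1+x+y+xy+x^2y+xy^2+x^2y^2$, whereas $(1+x)^2(1+y)^2$ contains $x^2$ and $y^2$; the scalar bookkeeping behind Equation~(\ref{eq: product}) uses rewritings such as $(s+1)r=r+rs$, which exploit that $r$ and $s$ are interchangeable scalars and do not survive when $\vec a,\vec b$ are linearly independent. Fortunately the identity is not needed here: since membership in $\mc{I}_{\mf{A}}$ is read off from the support, the plain product $(1+X^{\vec a})(1+X^{\vec b})=1+X^{\vec a}+X^{\vec b}+X^{\vec a+\vec b}$ already lies in $\mc{I}_{\mf{A}}$ when $\vec a+\vec b\in\mf{A}$, while neither factor does; this is exactly what the paper uses. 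The same remark disposes of your appeal to this identity for the primeness of $\mf{A}$ in part (2): one can instead note that $X^{\vec a}+X^{\vec b}\in\langle x_1,\dots,x_n\rangle\subseteq\mc{P}$, so $(1+X^{\vec a})(1+X^{\vec b})=(1+X^{\vec a+\vec b})+X^{\vec a}+X^{\vec b}\in\mc{P}$ and primeness of $\mc{P}$ applies (or one simply invokes part (1) after the equality $\mc{P}=\mc{I}_{\mf{A}}$ is proved).

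Second, part (2) is not actually proved: you explicitly flag the inclusion $\mc{P}\subseteq\mc{I}_{\mf{A}}$ as ``the main obstacle'' and only outline a search for a multivariable analogue of $f(x)(1+x)^N=(1+x)^{2N}$ with control of supports. This overlooks the force of the hypothesis $\langle x_1,\dots,x_n\rangle\subseteq\mc{P}$, which makes the step elementary and is the whole point of the paper's argument: for $f=\sum_i X^{\vec a_i}\in\mc{P}$ with $1\in\supp(f)$, expand
\[\prod_{i:\,\vec a_i\neq\vec 0}\bigl(1+X^{\vec a_i}\bigr)=f+h,\]
where $h$ is a sum of non-constant monomials (the cross terms), hence $h\in\langle x_1,\dots,x_n\rangle\subseteq\mc{P}$; the product is therefore a sum of two elements of $\mc{P}$, so it lies in $\mc{P}$, and primeness forces $1+X^{\vec a_i}\in\mc{P}$ for some $i$, i.e.\ $\vec a_i\in\mf{A}$ and $f\in\mc{I}_{\mf{A}}$. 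No support-controlling lemma in the style of the category-(II) one-variable arguments is needed; those are hard precisely because the ideals there do not contain the monomial ideal.
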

\begin{proof}
    
    To prove (1), let $\mf{A}$ be a subset of $\mathbb{N}^n$ and consider the ideal $\mc{I}_{\mf{A}}$ as defined above. Suppose $\mc{I}_\mf{A}$ is a prime ideal. Let $\vec{a}$ and $\vec{b}$ be elements in $\N^n$ such that $\vec{a}+\vec{b}\in \mf{A}$. Then
        \[(1+X^{\vec{a}})(1+X^{\vec{b}})= 1+X^{\vec{a}}+X^{\vec{b}}+X^{\vec{a}+\vec{b}} \in \mc{I}_\mf{A}.\]
        Since $\mc{I}_\mf{A}$ is prime, we have $(1+X^{\vec{a}})\in \mc{I}_\mf{A}$ or $(1+X^{\vec{b}})\in \mc{I}_\mf{A}$, which implies $\vec{a}\in \mf{A}$ or $\vec{b}\in \mf{A}$.
        
        Conversely, let $\mf{A}$ be a prime subset in $\N^n$ and consider the ideal $\mc{I}_\mf{A}$. Let $f=\sum_i X^{\vec{a}_i}$ and $g=\sum_i X^{\vec{b}_j}$ be arbitrary elements of $\B[x_1,\cdots,x_n]$ such that 
        \[fg=\sum_k X^{\vec{c}_k}\in I_\mf{A}.\]
        If $1\notin \supp(fg)$, then we have $1\notin \supp (f)$ or $1\notin\supp(g)$, in which case $f\in\mc{I}_{\mf{A}}$ or $g\in\mc{I}_{\mf{A}}$. If $1\in\supp(fg)$, then there exists $k$ such that $\vec{c}_k\in\mf{A}$, where $\vec{c}_k$ is one of the exponents of $fg$. But then there exists $i$ and $j$ for which $\vec{a}_i+\vec{b}_j=\vec{c}_k$. Now, since $\mf{A}$ is a prime subset, we must have $\vec{a}_i\in\mf{A}$ or $\vec{b}_j\in\mf{A}$. As $1\in \supp(f)\cap\supp(g)$, we have $f\in\mc{I}_{\mf{A}}$ or $g\in\mc{I}_{\mf{A}}$. Therefore, $\mc{I}_{\mf{A}}$ is a prime ideal.
        
        To prove (2), let $\mc{P}$ be a prime ideal containing $\langle x_1,\cdots,x_n\rangle$ and consider the set 
        \[\mf{A}=\{\vec{a}\in\N^n\mid 1+X^{\vec{a}}\in \mc{P}\}.\]
        If $\mf{A}=\emptyset,$ we have nothing to prove. Suppose $\mf{A}\neq \emptyset$. Then clearly $\mc{I}_\mf{A}$ is contained in $\mc{P}$. Let $f=\sum_iX^{\vec{a}_i}\in\mc{P}\backslash \mc{I}_{\mf{A}}$. Observe that we must have $\vec{a}_i\notin\mf{A}$ for all $i$. Then the product
        \[\prod_{i}(1+X^{\vec{a}_i})=f+h\in\mc{P}\]
        where $h\in\< x_1,\cdots,x_n\>$. Since $\mc{P}$ is prime, we have $1+X^{\vec{a}_i}\in\mc{P}$ for some $i$. In particular, $\vec{a}_i\in\mf{A}$ for some $i$, a contradiction. Therefore, $\mc{P}=\mc{I}_{\mf{A}}$.
  
\end{proof}

Note that, in the multivariable case, the set
\begin{align*}
    \mc{J}_{\mf{A}}:&=\mc{I}_{\mf{A}}-\{X^{\vec{a}}f\mid \vec{a}\in\N^n,\,f\notin\mc{I}_{\mf{A}}\}.
    \\
    &= \{X^{\vec{a}}f\mid \vec{a}\in\N^n,\,f\in\mc{I}_{\mf{A}}\textrm{ with $1\in\supp(f)$}\}\cup\{0\}.
\end{align*}
is not even an ideal. Furthermore, we do not know whether prime subsets of $\N^n$ parametrize all the prime ideals of $\B[x_1,\cdots,x_n]$. To illustrate the complexity of classifying primes in the multivariable case, we give two distinct prime ideals in $\B[x,y]$ for which $\mf{A}=\emptyset$.

\begin{example}
    The ideals 
    \[\mc{P}_1=\<  \{x^a+y^b+x^cy^d \mid a,b\geq 1\textrm{ and }c,d\geq 0\}\>.\]
    and
    \[\mc{P}_2=\<  \{x^a+y^b\,:\,a,b\geq 1\},\{1+x^{p-a}y^q+x^py^{q-b}\,:\,p,q\geq 1\textrm{ and }p-a,q-b\geq 0\}\>.\]
    are prime. We can prove that they are prime ideals by using the equality 
    \begin{align*}
        (x^a+y^b+x^py^q)(x^ay^b+x^{p+a}y^q+x^py^{q+b})&=(x^a+y^b)(x^a+x^py^q)(y^b+x^py^q) 
    \end{align*}
    and
    \begin{align*}
        (1+x^ay^b+f(x,y))(x^ay^b+f(x,y)+x^ay^bf(x,y))&=(1+x^ay^b)(1+f(x,y))(x^ay^b+f(x,y))
    \end{align*}
    in $\B[x,y]$, for all $a,b\geq 1$ and $p,q\geq 0$. 
\end{example}

\begin{remark}
    Using the results of \cite{JM25} we can produce a large number of examples of prime ideals, called 'closed' prime ideals. In the case of $\B[x_1, \dots, x_n]$ we can list all closed prime ideals. We give an example in the case $\B[x,y]$: all close prime ideals correspond to a (non-minimal) prime congruence, which in turn correspond to a matrix $[a, b]$, where $a, b\in \mathbb{Q}$, which is in turn equivalent to a monomial pre-order on monomials in two variables. Each closed prime ideal is generated by polynomials of the form $\sum_i x^{n_i}y^{m_i}$, where after reordering $n_1a+ m_1b = n_2a+ m_2b \geq n_ja+ m_jb,$ for $j > 2$. Since not all primes are closed primes, e.g. in $\B[x]$ there is a single closed prime ideal, this is an incomplete classification as well. 
\end{remark}

\bibliographystyle{alpha}

\begin{thebibliography}{10}
\bibitem{AA94}
    F. Alarc\'on, D. Anderson,
	\emph{Commutative semirings and their lattices of ideals}, Houston Journal of Mathematics, Volume 20, No. 4, (1994).

\bibitem{FGGM24}
    A. Fink, J. Giansiracusa, N. Giansiracusa, J. Mundinger
    \emph{Projective hypersurfaces in tropical scheme theory I: the Macaulay ideal} arXiv preprint arXiv:2405.16338 (2024)

\bibitem{Gol92} 
     J. Golan, 
     \emph{The theory of semirings with applications in mathematics and  theoretical computer science}, Longman Sci \&\ Tech., \textbf{54}, (1992).
     
\bibitem{JM14}
	D. Jo\'o, K. Mincheva,
	\emph{Prime congruences of idempotent semirings and a Nullstellensatz for tropical polynomials}, Sel. Math. New Ser. (2017), doi:10.1007/s00029-017-0322-x.

\bibitem{JM15}
	D. Jo\'o, K. Mincheva,
	\emph{On the dimension of the polynomial and the Laurent polynomial semiring}, J. of Algebra, vol. 507, p. 103--119 (2018).

\bibitem{JM25}
	D. Jo\'o, K. Mincheva,
	\emph{Varieties of prime tropical ideals and the dimension of the coordinate semiring} arXiv preprint arxiv:2501.18053 (2025)

\bibitem{MR18}
    D. Maclagan, F. Rincón,
    \emph{Tropical Ideals}  Compositio Mathematica. 154(3):640-670. doi:10.1112/S0010437X17008004 (2018)


    
\end{thebibliography}

\end{document}